\theoremstyle{plain}
\newtheorem{thm}{Theorem}[section]
\newtheorem{cor}[thm]{Corollary}
\newtheorem{prop}[thm]{Proposition}
\newtheorem{defn}[thm]{Definition}
\newtheorem{exa}[thm]{Example}
\begin{document}

\title [{{On Graded Radically Principal Ideals}}]{On Graded Radically Principal Ideals}

 \author[{{R. Abu-Dawwas }}]{\textit{Rashid Abu-Dawwas }}

\address
{\textit{Rashid Abu-Dawwas, Department of Mathematics, Yarmouk University, Irbid, Jordan.}}
\bigskip
{\email{\textit{rrashid@yu.edu.jo}}}

 \subjclass[2010]{13A02, 16W50}

\date{}

\begin{abstract} Let $R$ be a commutative $G$-graded ring with a nonzero unity. In this article, we introduce the concept of graded radically principal ideals. A graded ideal $I$ of $R$ is said to be graded radically principal if $Grad(I)=Grad(\langle c\rangle)$ for some homogeneous $c\in R$, where $Grad(I)$ is the graded radical of $I$. The graded ring $R$ is said to be graded radically principal if every graded ideal of $R$ is graded radically principal. We study graded radically principal rings. We prove an analogue of the Cohen theorem, in the graded case, precisely, a graded ring is graded radically principal if and only if every graded prime ideal is graded radically principal. Finally we study the graded radically principal property for the polynomial ring $R[X]$.
\end{abstract}

\keywords{Graded radical ideals, graded principal ideals, graded radically principal ideals, graded radically principal rings.
 }
 \maketitle

\section{Introduction}

Throughout this article, all rings are commutative with a nonzero unity $1$. Let $G$ be a group with identity $e$. Then a ring $R$ is said to be $G$-graded if $R=\displaystyle\bigoplus_{g\in G} R_{g}$ with $R_{g}R_{h}\subseteq R_{gh}$ for all $g, h\in G$, where $R_{g}$ is an additive subgroup of $R$ for all $g\in G$. The elements of $R_{g}$ are called homogeneous of degree $g$. If $x\in R$, then $x$ can be written uniquely as $\displaystyle\sum_{g\in G}x_{g}$, where $x_{g}$ is the component of $x$ in $R_{g}$. Also, we set $h(R)=\displaystyle\bigcup_{g\in G}R_{g}$. Moreover, it has been proved in \cite{Nastasescue} that $R_{e}$ is a subring of $R$ and $1\in R_{e}$. Let $I$ be an ideal of a graded ring $R$. Then $I$ is said to be a graded ideal if $I=\displaystyle\bigoplus_{g\in G}(I\cap R_{g})$, i.e., for $x\in I$, $x=\displaystyle\sum_{g\in G}x_{g}$, where $x_{g}\in I$ for all $g\in G$. An ideal of a graded ring need not be graded (see \cite{Nastasescue}). If $I$ is a graded ideal of $R$, then $R/I$ is a graded ring with $\left(R/I\right)_{g}=(R_{g}+I)/I$ for all $g\in G$.

Let $I$ be a proper graded ideal of $R$. Then the graded radical of $I$ is $Grad(I)=\left\{x=\displaystyle\sum_{g\in G}x_{g}\in R:\mbox{ for all }g\in G,\mbox{ there exists }n_{g}\in \mathbb{N}\mbox{ such that }x_{g}^{n_{g}}\in I\right\}$. Note that $Grad(I)$ is always a graded ideal of $R$ (see \cite{Refai Hailat}).

\begin{prop}(\cite{Farzalipour})\label{1} Let $R$ be a $G$-graded ring.

\begin{enumerate}

\item If $I$ and $J$ are graded ideals of $R$, then $I+J$, $IJ$ and $I\bigcap J$ are graded ideals of $R$.

\item If $a\in h(R)$, then $\langle a\rangle$ is a graded ideal of $R$.
\end{enumerate}
\end{prop}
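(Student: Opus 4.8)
The plan is to use the standard characterization already recalled in the introduction: an ideal $K$ of $R$ is graded if and only if for every $z\in K$ each homogeneous component $z_g$ of $z$ again lies in $K$. So in each case below I would take an arbitrary element of the ideal in question, expand it into homogeneous components, and track degrees through the grading condition $R_gR_h\subseteq R_{gh}$.

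For part (1), first take $z\in I+J$ and write $z=x+y$ with $x\in I$, $y\in J$; expanding $x=\sum_g x_g$ and $y=\sum_g y_g$, the hypothesis that $I,J$ are graded gives $x_g\in I$ and $y_g\in J$, so the degree-$g$ component $z_g=x_g+y_g$ of $z$ lies in $I+J$. For $I\cap J$ the check is even shorter: if $z=\sum_g z_g\in I\cap J$, then $z\in I$ graded forces $z_g\in I$ and $z\in J$ graded forces $z_g\in J$, whence $z_g\in I\cap J$. For $IJ$, write $z=\sum_{i=1}^{n}a_ib_i$ with $a_i\in I$, $b_i\in J$, and expand $a_i=\sum_g a_{i,g}$, $b_i=\sum_h b_{i,h}$ with $a_{i,g}\in I$, $b_{i,h}\in J$; then $z=\sum_{i}\sum_{g,h}a_{i,g}b_{i,h}$, and since $a_{i,g}b_{i,h}\in R_gR_h\subseteq R_{gh}$, the degree-$k$ component of $z$ equals $\sum_{i}\sum_{gh=k}a_{i,g}b_{i,h}$, a finite sum of elements of $IJ$, hence itself in $IJ$.

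For part (2), since $R$ is commutative, $\langle a\rangle=aR$; fix $g\in G$ with $a\in R_g$. Given $z=ar\in\langle a\rangle$, expand $r=\sum_h r_h$; then $z=\sum_h ar_h$ with $ar_h\in R_gR_h\subseteq R_{gh}$, so the degree-$k$ component of $z$ is the single term $ar_h$ with $gh=k$, i.e. $ar_{g^{-1}k}$, which lies in $aR=\langle a\rangle$. Thus every homogeneous component of every element of $\langle a\rangle$ stays in $\langle a\rangle$, so $\langle a\rangle$ is graded.

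I do not expect a genuine obstacle here: the proposition is foundational, and all three parts of (1) and part (2) reduce to the same component computation. The only point needing a little care is bookkeeping — one must not assume $G$ is abelian, so degrees should be combined via the group product through $R_gR_h\subseteq R_{gh}$ rather than through any additive notation, and in part (2) one should observe that for each target degree $k$ exactly one value of $h$ contributes. A more conceptual alternative for $I+J$ and $IJ$ would be to invoke that images of graded submodules under graded ring homomorphisms are graded, but the direct homogeneous-component argument is shorter and self-contained.
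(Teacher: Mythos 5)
Your argument is correct and complete. Note, however, that the paper itself gives no proof of this proposition: it is quoted from the reference of Farzalipour and Ghiasvand and used as a known fact, so there is no in-paper argument to compare against. Your component-wise verification is the standard one, and you handle the two points that actually require care — that the degree-$k$ component of an element of $IJ$ is the finite sum $\sum_i\sum_{gh=k}a_{i,g}b_{i,h}$ of elements of $IJ$, and that in part (2) the equation $gh=k$ has the unique solution $h=g^{-1}k$, so each homogeneous component of $ar$ is a single term $ar_{g^{-1}k}\in aR$ — so the proof stands on its own.
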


The concept of graded principal ideals have been introduced by Ashby in \cite{Ashby}. A graded ideal $I$ of $R$ is said to be graded principal if $I=\langle c\rangle$ for some $c\in h(R)$. The graded ring $R$ is said to be graded principal if every graded ideal of $R$ is graded principal. In this article, we follow \cite{AQ} to introduce the concept of graded radically principal ideals. A graded ideal $I$ of $R$ is said to be graded radically principal if $Grad(I)=Grad(\langle c\rangle)$ for some $c\in h(R)$. The graded ring $R$ is said to be graded radically principal if every graded ideal of $R$ is graded radically principal. We study graded rings with this property. Clearly, every graded principal ring is graded radically principal, we prove that the converse is not true in general.

Graded prime ideals have been introduced and studied by Refai, Hailat and Obiedat in \cite{Refai Hailat}. A proper graded ideal $P$ of $R$ is said to be graded prime if whenever $x, y\in h(R)$ such that $xy\in P$, then either $x\in P$ or $y\in P$. We prove the Cohen-type theorem for graded radically principal property, that is, a graded ring $R$ is graded radically principal if and only if every graded prime ideal is graded radically principal.

Atani and Tekir in \cite{Atani Tekir} introduced the avoidance graded prime theorem, that is, if $P\subseteq P_{1}\bigcup...\bigcup P_{n}$, where $P$ and $P_{i}$'s are graded prime ideals, then $P\subseteq P_{i}$ for some $i$, this property does not hold for infinite set of graded ideals $P_{i}$'s. We characterize graded rings with avoidance property for infinite set of graded prime ideals. Finally we study the graded radically principal property for the polynomial ring $R[X]$.

\section{Graded Radically Principal Ideals}

In this section, we introduce and study graded radically principal ideals.

\begin{defn}Let $R$ be a graded ring and $I$ be a graded ideal of $R$. Then $I$ is said to be graded radically principal if $Grad(I)=Grad(\langle c\rangle)$ for some $c\in h(R)$. The graded ring $R$ is said to be graded radically principal if every graded ideal of $R$ is graded radically principal.
\end{defn}

Clearly, every graded principal ring is graded radically principal. However, the next example shows that the converse is not true in general.

\begin{exa}\label{Example 2.3}Let $K$ be a field, $S=K[X, Y]$ and $G=\mathbb{Z}$. Then $S$ is $G$-graded by $S_{n}=\displaystyle\bigoplus_{i+j=n, i, j\geq0}Kx^{i}y^{j}$ with $deg(x)=deg(y)=1$. Then $I=\left\langle X^{2}, XY, Y^{2}\right\rangle$ is a graded ideal of $S$. Suppose that $R=S/I=K[X, Y]/I=K[x, y]$ where $x=\overline{X}$ and $y=\overline{Y}$. Then $R$ is $G$-graded by $R_{n}=\left(S/I\right)_{n}=(S_{n}+I)/I$. Note that, if $P$ is a graded prime ideal of $R$, then $x^{2}=y^{2}=0\in P$, and then $x, y\in P$, and hence $\langle x, y\rangle\subseteq P$. Since $\langle x, y\rangle$ is a graded maximal ideal of $R$, $P=\langle x, y\rangle$. Thus, the only graded prime ideal of $R$ is $P=\langle x, y\rangle=Grad(\langle0\rangle)$. Now, let $I$ be a graded ideal of $R$. Then either $I=R=Grad(\langle1\rangle)$ or $I\subseteq P$, and in this case, $Grad(I)=P=Grad(\langle0\rangle)$. Hence, $R$ is graded radically principal. On the other hand, $R$ is not graded principal since the graded ideal $P=\langle x, y\rangle$ is not graded principal.
\end{exa}

\begin{prop}\label{Proposition 2.4} Let $R$ be a graded ring and $I$ be a graded radically principal ideal of $R$. Then there exists $c\in I\bigcap h(R)$ such that $I =Grad(\langle c\rangle)$.
\end{prop}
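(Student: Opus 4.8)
The plan is to unwind the definition and then replace the ambient homogeneous generator by one of its powers. Since $I$ is graded radically principal, fix $a\in h(R)$ with $Grad(I)=Grad(\langle a\rangle)$. (I read the conclusion as $Grad(I)=Grad(\langle c\rangle)$: the left-hand side must be $Grad(I)$ rather than $I$ itself, since $Grad(\langle c\rangle)$ is always a graded radical ideal by idempotence of $Grad$.) The element $c$ will simply be a suitable power of $a$.

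First I would note that $a\in\langle a\rangle\subseteq Grad(\langle a\rangle)=Grad(I)$. Because $a$ is homogeneous, its only possibly nonzero homogeneous component is $a$ itself, so the very definition of $Grad(I)$ produces an $n\in\mathbb{N}$ with $a^{n}\in I$. Put $c=a^{n}$. A power of a homogeneous element is homogeneous, so $c\in h(R)$, and by construction $c\in I$; hence $c\in I\cap h(R)$.

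Next I would verify $Grad(\langle c\rangle)=Grad(\langle a\rangle)$. The inclusion $\langle c\rangle=\langle a^{n}\rangle\subseteq\langle a\rangle$ is clear and gives $Grad(\langle c\rangle)\subseteq Grad(\langle a\rangle)$ by monotonicity of $Grad$. For the reverse inclusion, $a^{n}=c\in\langle c\rangle$ together with $a\in h(R)$ gives $a\in Grad(\langle c\rangle)$, hence $\langle a\rangle\subseteq Grad(\langle c\rangle)$; applying $Grad$ and using $Grad(Grad(\langle c\rangle))=Grad(\langle c\rangle)$ yields $Grad(\langle a\rangle)\subseteq Grad(\langle c\rangle)$. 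Combining, $Grad(I)=Grad(\langle a\rangle)=Grad(\langle c\rangle)$ with $c\in I\cap h(R)$, which is the assertion. (Alternatively the reverse inclusion is direct: if $x=\sum_{g}x_{g}\in Grad(\langle a\rangle)$ then each $x_{g}^{m_{g}}=ar_{g}$ for some $r_{g}\in R$, so $x_{g}^{m_{g}n}=a^{n}r_{g}^{n}=c\,r_{g}^{n}\in\langle c\rangle$, whence $x\in Grad(\langle c\rangle)$.)

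I do not anticipate a genuine obstacle here; the only points needing a little care are the two routine consequences of the definition of the graded radical recalled in the introduction (cf. \cite{Refai Hailat}): that a single homogeneous member of $Grad(J)$ admits one exponent carrying it into $J$, and that $Grad$ is idempotent on graded ideals.
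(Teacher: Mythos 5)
Your proof is correct and follows essentially the same route as the paper's: choose $a\in h(R)$ with $Grad(I)=Grad(\langle a\rangle)$, observe that $a^{n}\in I$ for some $n$, and take $c=a^{n}\in I\cap h(R)$. Your reading of the conclusion as $Grad(I)=Grad(\langle c\rangle)$ is the right one --- it is exactly how the proposition is invoked in Proposition \ref{Proposition 2.5}, and the literal statement would force $I$ to be radical --- and your explicit check that $Grad(\langle a\rangle)=Grad(\langle a^{n}\rangle)$ supplies a step the paper only asserts.
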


\begin{proof}Since $I$ is graded radically principal, there exists $a\in h(R)$ such that $Grad(I)=Grad(\langle a\rangle)$, and then $a\in \langle a\rangle\subseteq Grad(\langle a\rangle)=Grad(I)$, which implies that $a^{n}\in I$ for some positive integer $n$. Now, $a\in h(R)$, so $a\in R_{g}$ for some $g\in G$, and then $a^{n}=\underbrace{a.a...a}_{n-times}\in \underbrace{R_{g}R_{g}...R_{g}}_{n-times}\subseteq R_{g^{n}}\subseteq h(R)$. Thus, $c=a^{n}\in I\bigcap h(R)$ such that $I\subseteq Grad(I)=Grad(\langle a\rangle)=Grad(\langle a^{n}\rangle)=Grad(\langle c\rangle)\subseteq I$.
\end{proof}

\begin{prop}\label{Proposition 2.5}Let $R$ be a graded ring. Suppose that $I$ and $J$ are two graded radically principal ideals of $R$. Then $IJ$ and $I\bigcap J$ are graded radically principal ideals of $R$.
\end{prop}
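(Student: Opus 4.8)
The plan is to reduce the statement to two elementary facts about the graded radical and then finish with a one-line computation. The key fact is the graded analogue of the classical identity $\sqrt{AB}=\sqrt{A\cap B}=\sqrt{A}\cap\sqrt{B}$: for any graded ideals $A,B$ of $R$,
\[
Grad(AB)=Grad(A\cap B)=Grad(A)\cap Grad(B).
\]
I would prove this as a chain of inclusions. Since $AB\subseteq A\cap B\subseteq A$ and $A\cap B\subseteq B$, monotonicity of $Grad$ gives $Grad(AB)\subseteq Grad(A\cap B)\subseteq Grad(A)\cap Grad(B)$. For the remaining inclusion $Grad(A)\cap Grad(B)\subseteq Grad(AB)$, take $x=\sum_{g\in G}x_g\in Grad(A)\cap Grad(B)$; for each $g$ there are $n_g,m_g\in\mathbb{N}$ with $x_g^{n_g}\in A$ and $x_g^{m_g}\in B$, hence $x_g^{\,n_g+m_g}=x_g^{n_g}x_g^{m_g}\in AB$, so $x\in Grad(AB)$ by the definition of the graded radical. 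The second fact I would record is purely routine: if $c\in R_g$ and $d\in R_h$ are homogeneous, then $cd\in R_{gh}\subseteq h(R)$ and $\langle c\rangle\langle d\rangle=\langle cd\rangle$.

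With these in hand the proposition follows immediately. Because $I$ and $J$ are graded radically principal, choose $c,d\in h(R)$ with $Grad(I)=Grad(\langle c\rangle)$ and $Grad(J)=Grad(\langle d\rangle)$. Applying the identity above twice,
\[
Grad(IJ)=Grad(I)\cap Grad(J)=Grad(\langle c\rangle)\cap Grad(\langle d\rangle)=Grad\big(\langle c\rangle\langle d\rangle\big)=Grad(\langle cd\rangle),
\]
and likewise $Grad(I\cap J)=Grad(I)\cap Grad(J)=Grad(\langle cd\rangle)$. Since $cd\in h(R)$, both $IJ$ and $I\cap J$ are graded radically principal; note that these are genuine graded ideals of $R$ by Proposition~\ref{1}, so that $Grad$ legitimately applies to them.

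The only point requiring real care is the first identity, and the single subtlety there is that the graded radical is defined component-by-component, so the argument for the reverse inclusion must be carried out on each homogeneous component $x_g$ separately, combining the exponents via $x_g^{\,n_g+m_g}\in AB$; once this is observed there is no genuine obstacle. It is also worth keeping in mind the degenerate cases $I=R$ or $J=R$, in which $IJ$ and $I\cap J$ both coincide with the smaller of the two ideals and the claim is trivial, so that $Grad$ is only ever applied to proper graded ideals as its definition requires.
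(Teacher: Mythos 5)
Your proof is correct and follows essentially the same route as the paper: both reduce the claim to the identity $Grad(IJ)=Grad(I\cap J)=Grad(I)\cap Grad(J)$ together with $Grad(\langle c\rangle)\cap Grad(\langle d\rangle)=Grad(\langle cd\rangle)$ and then conclude in one line. The only difference is that you actually verify the radical identity componentwise (which the paper uses without proof), a worthwhile addition given that the graded radical is defined on homogeneous components.
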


\begin{proof}By Proposition \ref{1}, $IJ$ and $I\bigcap J$ are graded ideals of $R$. Also, by Proposition \ref{Proposition 2.4}, $Grad(I)=Grad(\langle x\rangle)$ and $Grad(J)=Grad(\langle y\rangle)$ for some $x\in I\bigcap h(R)$ and $y\in J\bigcap h(R)$. So, $xy\in h(R)$ such that $Grad(IJ)=Grad\left(I\bigcap J\right)=Grad(I)\bigcap Grad(J)=Grad(\langle x\rangle)\bigcap Grad(\langle y\rangle)=Grad(\langle xy\rangle)$. Hence, $IJ$ and $I\bigcap J$ are graded radically principal ideals of $R$.
\end{proof}

\begin{prop}\label{Proposition 2.6} Let $R$ be a graded radically principal ring. Then $R/I$ is a graded radically principal ring for every graded ideal $I$ of $R$.
\end{prop}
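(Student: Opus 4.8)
The plan is to exploit the order-preserving correspondence between graded ideals of $R/I$ and graded ideals of $R$ containing $I$, together with the fact that forming the graded radical commutes with passage to the quotient. So let $\bar{J}$ be an arbitrary graded ideal of $R/I$; then $\bar{J}=J/I$ for a graded ideal $J$ of $R$ with $I\subseteq J$. Since $R$ is graded radically principal, there is $c\in h(R)$ with $Grad(J)=Grad(\langle c\rangle)$. Put $\bar{c}=c+I$, a homogeneous element of the graded ring $R/I$ (since $c\in R_{g}$ for some $g$ forces $\bar{c}\in (R_{g}+I)/I=(R/I)_{g}$). The goal is to show $Grad_{R/I}(\bar{J})=Grad_{R/I}(\langle\bar{c}\rangle)$, where $Grad_{R/I}$ denotes the graded radical computed inside $R/I$.

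The technical heart, which I would prove first, is the following: \emph{if $K$ is a graded ideal of $R$ with $I\subseteq K$, then $Grad_{R/I}(K/I)=Grad(K)/I$.} To see this, write $x=\sum_{g\in G}x_{g}\in R$ with $x_{g}\in R_{g}$; the homogeneous component of $\bar{x}=x+I$ in $(R/I)_{g}=(R_{g}+I)/I$ is $\overline{x_{g}}$, and since $I\subseteq K$ one has $\overline{x_{g}}^{\,n}\in K/I$ if and only if $x_{g}^{n}\in K$, for every $g$ and every $n$. Feeding this into the definition of the graded radical yields $\bar{x}\in Grad_{R/I}(K/I)$ precisely when $x\in Grad(K)$, and since $I\subseteq K\subseteq Grad(K)$ this says exactly $Grad_{R/I}(K/I)=Grad(K)/I$. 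This bookkeeping with the quotient grading $(R/I)_{g}=(R_{g}+I)/I$ and the component-wise definition of $Grad$ is the step I expect to require the most care, though it is otherwise routine.

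To conclude, observe that $I\subseteq J\subseteq Grad(J)=Grad(\langle c\rangle)$, so $\langle c\rangle+I$ is a graded ideal of $R$ (Proposition \ref{1}) squeezed between $\langle c\rangle$ and $Grad(\langle c\rangle)$; hence $Grad(\langle c\rangle+I)=Grad(\langle c\rangle)$, using that $Grad$ is idempotent on graded ideals (immediate from the definition, since for homogeneous $a$ one has $a\in Grad(K)$ iff $a^{m}\in K$ for some $m$). Moreover the image of $\langle c\rangle+I$ under the quotient map is $(\langle c\rangle+I)/I=\langle\bar{c}\rangle$. Applying the lemma twice and chaining equalities gives
\[
Grad_{R/I}(\bar{J})=Grad_{R/I}(J/I)=Grad(J)/I=Grad(\langle c\rangle)/I=Grad(\langle c\rangle+I)/I=Grad_{R/I}(\langle\bar{c}\rangle),
\]
so $\bar{J}$ is graded radically principal in $R/I$. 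As $\bar{J}$ was an arbitrary graded ideal of $R/I$, the ring $R/I$ is graded radically principal.
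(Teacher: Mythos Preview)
Your proof is correct and follows essentially the same route as the paper: lift a graded ideal of $R/I$ to a graded ideal $J$ of $R$, find $c\in h(R)$ with $Grad(J)=Grad(\langle c\rangle)$, and descend to $\bar c\in h(R/I)$. The paper's version is far terser---it simply asserts $Grad(J/I)=Grad(\langle\bar c\rangle)$ without justification---whereas you supply the supporting lemma $Grad_{R/I}(K/I)=Grad(K)/I$ and the passage through $\langle c\rangle+I$, which is a genuine gain in rigor.
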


\begin{proof}Let $J/I$ be a graded ideal of $R/I$. Then $J$ is a graded ideal of $R$, and then $J=Grad(\langle c\rangle)$ for some $c\in h(R)$. So, $\overline{c}\in h\left(R/I\right)$ such that $Grad\left(J/I\right)=Grad(\langle \overline{c}\rangle)$. Hence, $R/I$ is a graded radically principal ring.
\end{proof}

The next theorem gives an analogue of the Cohen-type theorem for graded radically principal rings.

\begin{thm}\label{Theorem 2.7} Let $R$ be a graded ring. Then $R$ is graded radically principal if and only if every graded prime ideal of $R$ is graded radically principal.
\end{thm}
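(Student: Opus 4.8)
The forward direction is immediate: if every graded ideal of $R$ is graded radically principal, then in particular every graded prime ideal is. So the plan is to prove the contrapositive of the converse: assuming $R$ is \emph{not} graded radically principal, I would produce a graded prime ideal that is not graded radically principal. To this end, let $\mathcal{S}$ be the set of all graded ideals of $R$ that are not graded radically principal; by hypothesis $\mathcal{S}\neq\emptyset$. The first key step is to show $\mathcal{S}$ has a maximal element with respect to inclusion, via Zorn's lemma: given a chain $\{I_\alpha\}$ in $\mathcal{S}$, the union $I=\bigcup_\alpha I_\alpha$ is again a graded ideal, and I must check $I\in\mathcal{S}$. If instead $Grad(I)=Grad(\langle c\rangle)$ for some $c\in h(R)$, then $c\in Grad(I)$, so each homogeneous component argument (as in Proposition \ref{Proposition 2.4}) gives $c^n\in I$ for some $n$, hence $c^n\in I_\alpha$ for some $\alpha$ since the chain is totally ordered; then $Grad(\langle c\rangle)=Grad(\langle c^n\rangle)\subseteq Grad(I_\alpha)\subseteq Grad(I)=Grad(\langle c\rangle)$, forcing $Grad(I_\alpha)=Grad(\langle c\rangle)$, contradicting $I_\alpha\in\mathcal{S}$. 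So $I\in\mathcal{S}$, Zorn applies, and we get a maximal $P\in\mathcal{S}$.

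The second, and main, step is to show this maximal counterexample $P$ is graded prime. Suppose not: then there are homogeneous elements $a,b\in h(R)$ with $ab\in P$ but $a\notin P$ and $b\notin P$. Consider the graded ideals $P+\langle a\rangle$ and $(P:a)=\{r\in R: ra\in P\}$, both of which strictly contain $P$ (the latter because $b\in(P:a)\setminus P$), and both of which are graded — $(P:a)$ is graded because $P$ is graded and $a$ is homogeneous. By maximality of $P$ in $\mathcal{S}$, both are graded radically principal, so by Proposition \ref{Proposition 2.4} we may write $Grad(P+\langle a\rangle)=Grad(\langle x\rangle)$ with $x\in (P+\langle a\rangle)\cap h(R)$, say $x=p+ra$ with $p\in P$ homogeneous and $r\in h(R)$ (matching degrees), and $Grad((P:a))=Grad(\langle y\rangle)$ with $y\in(P:a)\cap h(R)$. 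The goal is to pin down $Grad(P)$ in terms of these, and I expect the identity to be
$$Grad(P) = Grad\big(\langle x\rangle\, (P:a)\big) = Grad\big(\langle xy\rangle\big),$$
which would contradict $P\in\mathcal{S}$. Establishing this is the heart of the argument and the step I expect to be the main obstacle. One inclusion should come from $xy\in x(P:a)\subseteq P$: indeed $x\cdot(P:a)=(p+ra)(P:a)\subseteq P(P:a)+ra(P:a)\subseteq P$, so $Grad(\langle xy\rangle)\subseteq Grad(P)$ (using $xy\in h(R)$). For the reverse inclusion, take a homogeneous $z\in Grad(P)$, so $z^n\in P\subseteq P+\langle a\rangle\subseteq Grad(\langle x\rangle)$, hence $z\in Grad(\langle x\rangle)$ and so $z^m\in\langle x\rangle$ for some $m$, say $z^m=sx$ with $s\in h(R)$; then $z^{mn}=s^n x^n$. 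Multiplying the relation $z^n\in P$ appropriately and using that $x=p+ra$, one shows $s\cdot z^n\in P$ after enough powers, which (since $sx = z^m$) feeds $s$ into $(P:a)$-type membership; pushing this through, one concludes a suitable power of $z$ lies in $\langle x\rangle(P:a)$, giving $z\in Grad(\langle x\rangle(P:a))$. The bookkeeping of exponents here is delicate — this is exactly the standard Cohen/Kaplansky maneuver, and the care needed is to keep everything homogeneous throughout, which is guaranteed because $x$, $y$, $a$, $p$, $s$ are all chosen in $h(R)$ and products/sums of matching-degree homogeneous elements stay homogeneous.

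Once $Grad(P)=Grad(\langle xy\rangle)$ is established, $P$ is graded radically principal, contradicting $P\in\mathcal{S}$; hence $P$ must be graded prime, and it is a graded prime ideal that is not graded radically principal, proving the contrapositive. I would close by remarking that every ingredient used — that unions of chains of graded ideals are graded, that colon ideals of graded ideals by homogeneous elements are graded, and Proposition \ref{Proposition 2.4} allowing us to take the radical-generator inside $h(R)$ — is available in the graded setting, so the classical proof transfers with only the homogeneity caveat.
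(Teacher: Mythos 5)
Your proposal is correct, but at the decisive step it takes a genuinely different route from the paper. Both proofs set up Zorn's lemma identically (the set of non--graded-radically-principal graded ideals is closed under unions of chains, by exactly the argument you give), and both aim to show a maximal element $P$ is graded prime. Where you diverge is in deriving the contradiction: you run the classical Cohen--Kaplansky maneuver with the colon ideal, writing $Grad(P+\langle a\rangle)=Grad(\langle x\rangle)$ and $Grad((P:a))=Grad(\langle y\rangle)$ and aiming for $Grad(P)=Grad(\langle x\rangle(P:a))=Grad(\langle xy\rangle)$. The paper instead takes $P_{1}=P+\langle a\rangle$ and $P_{2}=P+\langle b\rangle$ (for $ab\in P$, $a,b\notin P$), notes $P^{2}\subseteq P_{1}P_{2}\subseteq P$ so that $Grad(P_{1}P_{2})=Grad(P)$, and invokes Proposition \ref{Proposition 2.5} to conclude $P_{1}P_{2}$, hence $P$ up to radical, is graded radically principal --- no colon ideals and no exponent bookkeeping at all. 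This is the real payoff of working with radicals rather than generators: $Grad(P^{2})=Grad(P)$ makes the symmetric choice $P_{2}=P+\langle b\rangle$ available, whereas in the original Cohen theorem (finite generation) one is forced into $(P:a)$. For what it is worth, the step you flag as the ``main obstacle'' closes more easily than you anticipate: for homogeneous $z$ with $z^{m}=sx\in P$, you get $z^{2m}=x\cdot s\cdot(sx)\in\langle x\rangle P\subseteq\langle x\rangle(P:a)$ directly, since $sx=z^{m}\in P\subseteq(P:a)$; combined with $\langle x\rangle(P:a)\subseteq P$ and $Grad(\langle x\rangle(P:a))=Grad(\langle x\rangle)\cap Grad((P:a))=Grad(\langle xy\rangle)$, your identity follows. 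So your argument is completable and fully rigorous once that line is inserted, but the paper's product trick is the shorter path and is the one its Proposition \ref{Proposition 2.5} was built for.
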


\begin{proof}Suppose that every graded prime ideal of $R$ is a graded radically principal ideal. Assume that $X$ is the set of all graded ideals of $R$ that are not graded radically principal. We show that $X=\emptyset$. Suppose that $X\neq\emptyset$. Let $I_{0}\subseteq I_{1}\subseteq...\subseteq I_{n}\subseteq...$ be an increasing chain in $X$. Suppose that $I=\displaystyle\bigcup_{i}I_{i}$. Then $I$ is a graded ideal of $R$. If $I$ is graded radically principal, then $Grad(I)=Grad(\langle c\rangle)$ for some $c\in h(R)$. Since $c\in Grad(I)$, $c^{k}\in I$ for some positive integer $k$, and then $c^{k}\in I_{j}$ for some $j$, which implies that $c\in Grad(I_{j})$, and hence $Grad(I_{j})=Grad(\langle c\rangle)$, which is a contradiction. Thus, $I\in E$ and clearly, $I_{i}\subseteq I$ for all $i$. By Zorn's lemma, $X$ has a maximal element, say $P$. We show that $P$ is graded prime. Let $x, y\in h(R)$ such that $xy\in P$. Suppose that $x, y\notin P$, and let $P_{1}=P+\langle x\rangle$ and $P_{2}=P+\langle y\rangle$. Since $P$ is maximal in $X$ with $P\subsetneqq P_{1}$ and $P\subsetneqq P_{2}$, $P_{1}$ and $P_{2}$ are graded radically principal ideals of $R$, and then by Proposition \ref{Proposition 2.5}, $P_{1}P_{2}$ is a graded radically principal ideal of $R$, but $Grad(P_{1}P_{2})=Grad(P)$ since $P_{1}P_{2}\subseteq P$ and $P^{2}\subseteq P_{1}P_{2}$. This gives a contradiction. Hence, $P$ is a graded prime ideal of $R$ which is not graded radically principal, that is a contradiction. Thus, $X=\emptyset$. The converse is clear.
\end{proof}

Let $S\subseteq h(R)$ be a multiplicative set. Then $S^{-1}R$ is a graded ring with $(S^{-1}R)_{g}=\left\{\frac{a}{s},a\in R_{h}, s\in S\cap R_{hg^{-1}}\right\} $.

\begin{cor}\label{Corollary 2.8} Let $R$ be a graded radically principal ring. If $S$ is a multiplicative subset of $h(R)$, then $S^{-1}R$ is a graded radically principal ring.
\end{cor}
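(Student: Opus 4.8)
The plan is to derive Corollary \ref{Corollary 2.8} from the Cohen-type theorem (Theorem \ref{Theorem 2.7}) together with the standard correspondence between graded prime ideals of $S^{-1}R$ and graded prime ideals of $R$ disjoint from $S$. By Theorem \ref{Theorem 2.7}, it suffices to show that every graded prime ideal $Q$ of $S^{-1}R$ is graded radically principal. So let $Q$ be such an ideal. The key structural fact is that $Q = S^{-1}P$ for the graded prime ideal $P = \{x \in R : \tfrac{x}{1} \in Q\}$ of $R$, and moreover $S^{-1}$ commutes with the graded radical operation, i.e.\ $Grad(S^{-1}P) = S^{-1}(Grad(P))$; both of these are routine consequences of the definitions of the grading on $S^{-1}R$ and of $Grad$, using that localization is exact. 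I would state these as the first two steps and verify them briefly.

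Next, since $R$ is graded radically principal, $P$ is a graded radically principal ideal of $R$, so by Proposition \ref{Proposition 2.4} there is $c \in P \cap h(R)$ with $P = Grad(\langle c \rangle)$ (in fact one only needs $Grad(P) = Grad(\langle c\rangle)$, which is the definition). Now $\tfrac{c}{1} \in h(S^{-1}R)$ because $c$ is homogeneous, and I would compute
\[
Grad\bigl(\langle \tfrac{c}{1}\rangle\bigr) = Grad\bigl(S^{-1}\langle c\rangle\bigr) = S^{-1}\bigl(Grad(\langle c\rangle)\bigr) = S^{-1}\bigl(Grad(P)\bigr) = Grad(S^{-1}P) = Grad(Q),
\]
using in turn that $\langle \tfrac{c}{1}\rangle = S^{-1}\langle c\rangle$, the commutation of $S^{-1}$ with $Grad$, the choice of $c$, the commutation again, and $Q = S^{-1}P$. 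This exhibits $Q$ as graded radically principal and completes the proof.

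The main obstacle I anticipate is purely bookkeeping with the grading on $S^{-1}R$: one must check that the homogeneous components of an element $\tfrac{a}{s}$ behave well, that $P = \{x : \tfrac{x}{1}\in Q\}$ is genuinely a \emph{graded} ideal (not just an ideal) and is graded prime, and that the radical identity $Grad(S^{-1}I) = S^{-1}(Grad(I))$ holds componentwise in the sense of the definition of $Grad$ given in the introduction — i.e.\ for each homogeneous component one has a single exponent. None of this is deep, but it requires care because the definition of $Grad$ in this paper is componentwise (a single $n_g$ per component) rather than the naive "some power lands in $I$", so the verification should be done at the level of homogeneous elements. Everything else follows formally from Theorem \ref{Theorem 2.7}.
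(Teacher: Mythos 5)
Your proposal is correct and follows essentially the same route as the paper: reduce via Theorem \ref{Theorem 2.7} to graded prime ideals of $S^{-1}R$, write such an ideal as $S^{-1}P$ for a graded prime $P$ of $R$, and use that localization commutes with $Grad$ to transport $Grad(P)=Grad(\langle c\rangle)$ to $Grad(S^{-1}P)=Grad\bigl(\langle \tfrac{c}{1}\rangle\bigr)$. The only difference is that you flag and propose to verify the bookkeeping facts (that $P$ is graded, that $Grad(S^{-1}I)=S^{-1}Grad(I)$ componentwise) which the paper asserts without proof.
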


\begin{proof}Let $P$ be a graded prime ideal of $S^{-1}R$. Then $P=S^{-1}K$ for some graded prime ideal of $R$. Since $R$ is graded radically principal, $K=Grad(\langle c\rangle)$ for some $c\in h(R)$, and since $Grad(P)=S^{-1}K=S^{-1}Grad(\langle c\rangle)=Grad(S^{-1}\langle c\rangle)$, we have that $P$ is a graded radically principal ideal of $S^{-1}R$. Hence, by Theorem \ref{Theorem 2.7}, $S^{-1}R$ is a graded radically principal ring.
\end{proof}

Let $R_{1}$ and $R_{2}$ be two $G$-graded rings. Then $R_{1}\times R_{2}$ is $G$-graded ring by $(R_{1}\times R_{2})_{g}=(R_{1})_{g}\times (R_{2})_{g}$ for all $g\in G$.

\begin{cor}\label{Corollary 2.10}Let $R_{1}$, $R_{2}$,..., $R_{n}$ be $G$-graded rings and $R=R_{1}\times R_{2}\times...\times R_{n}$. Then $R$ is graded radically principal if and only if $R_{i}$ is graded radically principal for all $1\leq i\leq n$.
\end{cor}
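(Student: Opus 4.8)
The plan is to reduce immediately to the case $n=2$. Using the obvious graded isomorphism $R_{1}\times\cdots\times R_{n}\cong(R_{1}\times\cdots\times R_{n-1})\times R_{n}$ and induction on $n$, it suffices to prove that $R_{1}\times R_{2}$ is graded radically principal if and only if both $R_{1}$ and $R_{2}$ are.

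For the ``only if'' direction I would note that each factor is a graded homomorphic image of $R=R_{1}\times R_{2}$: the subsets $R_{1}\times 0$ and $0\times R_{2}$ are graded ideals of $R$ (their intersections with $R_{g}=(R_{1})_{g}\times (R_{2})_{g}$ are $(R_{1})_{g}\times 0$ and $0\times (R_{2})_{g}$, which sum back to the ideals), and $R/(0\times R_{2})\cong R_{1}$, $R/(R_{1}\times 0)\cong R_{2}$ as $G$-graded rings. Hence this direction follows at once from Proposition \ref{Proposition 2.6}.

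For the ``if'' direction the natural tool is the Cohen-type Theorem \ref{Theorem 2.7}: it is enough to show that every graded prime ideal $P$ of $R=R_{1}\times R_{2}$ is graded radically principal. I would first determine the shape of such $P$. The idempotents $(1,0)$ and $(0,1)$ are homogeneous of degree $e$ and $(1,0)(0,1)=0\in P$, so, $P$ being graded prime, one of them --- say $(1,0)$ --- lies in $P$; then $R_{1}\times 0\subseteq P$ and, writing $P=R_{1}\times P_{2}$, one checks that $P_{2}$ is a graded ideal of $R_{2}$ with $R/P\cong R_{2}/P_{2}$ a domain, so $P_{2}$ is a graded prime ideal of $R_{2}$. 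By symmetry we may therefore assume $P=P_{1}\times R_{2}$ with $P_{1}$ a graded prime ideal of $R_{1}$. Since $R_{1}$ is graded radically principal and $P_{1}=Grad(P_{1})$, Proposition \ref{Proposition 2.4} yields a homogeneous $c_{1}\in R_{1}$ with $P_{1}=Grad(\langle c_{1}\rangle)$. As $\langle(c_{1},1)\rangle=\langle c_{1}\rangle\times R_{2}$, this gives $Grad(\langle(c_{1},1)\rangle)=Grad(\langle c_{1}\rangle)\times R_{2}=P_{1}\times R_{2}=P$; together with the trivial converse implication of Theorem \ref{Theorem 2.7} this completes the argument.

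The step I expect to be the main obstacle is the closing one: $(c_{1},1)$ is a homogeneous element of $R_{1}\times R_{2}$ only if $\deg c_{1}=e$, because $1$ is homogeneous of degree $e$ in $R_{2}$, whereas Proposition \ref{Proposition 2.4} gives no control on $\deg c_{1}$. The real content is thus to produce, in a graded radically principal ring, a degree-$e$ homogeneous generator for the graded radical of a graded prime ideal --- or, failing that, to track the degrees of the generators coming from the two factors and make them agree by replacing $c_{1},c_{2}$ with suitable powers $c_{1}^{k},c_{2}^{m}$ (which leaves $Grad(\langle c_{1}\rangle)$ and $Grad(\langle c_{2}\rangle)$ unchanged) so that $\deg c_{1}^{k}=\deg c_{2}^{m}$. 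I would examine carefully whether this degree matching is always possible, especially when one of the factors has few homogeneous units. Everything else --- the classification of graded primes of a product, the identity $Grad(A\times B)=Grad(A)\times Grad(B)$ for graded ideals, and the reduction to prime ideals --- is formal once Theorem \ref{Theorem 2.7} and Propositions \ref{Proposition 2.4} and \ref{Proposition 2.5} are available.
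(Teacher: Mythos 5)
Your route is the same as the paper's: Proposition \ref{Proposition 2.6} for the forward direction, and for the converse the classification of graded prime ideals of a product followed by Theorem \ref{Theorem 2.7}. But the obstacle you flag at the end is not a deferrable technicality --- it is a genuine gap, and it sits (silently) in the paper's own proof too, which simply asserts that $c=(1,\dots,c_{j},\dots,1)\in h(R)$. With the product grading $(R_{1}\times\dots\times R_{n})_{g}=(R_{1})_{g}\times\dots\times(R_{n})_{g}$, a tuple is homogeneous only when all coordinates lie in the \emph{same} degree $g$; since $1$ lies in $(R_{i})_{e}$ and in no other component, $(1,\dots,c_{j},\dots,1)$ is homogeneous only if $\deg c_{j}=e$, and neither Proposition \ref{Proposition 2.4} nor anything else controls $\deg c_{j}$. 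Your proposed repair --- raising $c_{1}$ to a power to match degrees --- cannot work: the other coordinate is $1$, which is stuck in degree $e$, so you would need $(\deg c_{1})^{k}=e$, i.e.\ $\deg c_{1}$ of finite order in $G$. The substitute $c=(0,\dots,c_{j},\dots,0)$ is homogeneous but $Grad(\langle c\rangle)$ then has graded-nilradical components in the other factors instead of all of $R_{i}$, so it does not give $P$ either.

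In fact the ``if'' direction is false as stated. Take $G=\mathbb{Z}$, $R_{1}=K[X]$ graded by $(R_{1})_{j}=KX^{j}$ for $j\geq 0$, and $R_{2}=K$ trivially graded; both are graded (radically) principal. In $R=R_{1}\times R_{2}$ the graded prime ideal $P=\langle X\rangle\times K$ is not graded radically principal: the homogeneous elements of $R$ are the pairs $(\alpha,\beta)\in K\times K$ in degree $0$ and $(\alpha X^{j},0)$ in degree $j\geq 1$, and one checks directly that $Grad(\langle(\alpha,\beta)\rangle)$ is one of $0$, $K[X]\times 0$, $0\times K$, $R$, while $Grad(\langle(\alpha X^{j},0)\rangle)$ is $\langle X\rangle\times 0$ (or $0$), none of which equals $P=Grad(P)$. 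So the corollary needs an extra hypothesis guaranteeing a degree-$e$ generator (trivial grading, or homogeneous units of the required degrees). Everything else in your write-up --- the reduction to $n=2$, the identification of the graded primes of the product, $\langle(c_{1},1)\rangle=\langle c_{1}\rangle\times R_{2}$, and $Grad(A\times B)=Grad(A)\times Grad(B)$ --- is correct; you were right to single out the degree of $c_{1}$ as the crux, and the honest answer is that it cannot be forced to be $e$.
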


\begin{proof}Suppose that $R$ is graded radically principal. Then by Proposition \ref{Proposition 2.6}, $R_{i}=R/I_{i}$, where $I_{i}=R_{1}\times R_{2}\times...\times R_{i-1}\times\{0\}\times R_{i+1}\times...\times R_{n}$, is graded radically principal for all $1\leq i\leq n$. Conversely, let $P$ be a graded prime ideal of $R$. Then there exists a graded prime ideal $K_{j}$ of $R_{j}$ for some $j$ such that $P=R_{1}\times...\times K_{j}\times...\times R_{n}$. Since $R_{j}$ is graded radically principal, there exists $c_{j}\in K_{j}\bigcap h(R_{j})$ such that $K_{j}=Grad(\langle c_{j}\rangle)$, and then $c=(1,..., c_{j},..., 1)\in h(R)$ such that $Grad(P)=Grad(\langle c\rangle)$. So, $P$ is a graded radically principal ideal of $R$, and hence by Theorem \ref{Theorem 2.7}, $R$ is a graded radically principal ring.
\end{proof}

\begin{thm}\label{Theorem 2.11} Let $R$ be a graded ring. Then $R$ is a graded radically principal ring if and only if for every graded prime ideal $P$ of $R$, we have $P\nsubseteq\displaystyle\bigcup_{P\nsubseteq K}K$.
\end{thm}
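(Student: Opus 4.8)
The plan is to run everything through the graded Cohen theorem (Theorem~\ref{Theorem 2.7}) together with the description of the graded radical of a graded ideal as the intersection of the graded prime ideals containing it. I will use two facts, both in the circle of ideas of \cite{Refai Hailat}: (i) for $c\in h(R)$, $Grad(\langle c\rangle)=\bigcap\{K : K\text{ graded prime},\ c\in K\}$; and (ii) $Grad(K)=K$ for every graded prime ideal $K$ (if $x=\sum_{g}x_{g}\in Grad(K)$ then $x_{g}^{n_{g}}\in K$, and since $x_{g}\in h(R)$ and $K$ is graded prime, primeness forces $x_{g}\in K$, so $x\in K$).

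For the forward implication, suppose $R$ is graded radically principal and let $P$ be a graded prime ideal; then $P$, being a graded ideal, is graded radically principal, so by Proposition~\ref{Proposition 2.4} there is $c\in P\cap h(R)$ with $P=Grad(\langle c\rangle)$. If $K$ is a graded prime ideal with $c\in K$, then $\langle c\rangle\subseteq K$ and hence $P=Grad(\langle c\rangle)\subseteq Grad(K)=K$; contrapositively, $c$ lies in no graded prime $K$ with $P\nsubseteq K$. Thus $c\in P\setminus\bigcup_{P\nsubseteq K}K$, so $P\nsubseteq\bigcup_{P\nsubseteq K}K$.

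For the converse, by Theorem~\ref{Theorem 2.7} it suffices to show that a graded prime ideal $P$ with $P\nsubseteq\bigcup_{P\nsubseteq K}K$ is graded radically principal. Choose a homogeneous $c\in P$ that lies in no graded prime $K$ with $P\nsubseteq K$ (see the remark below on obtaining $c$ homogeneous). Then every graded prime ideal containing $c$ contains $P$, so by fact (i), $Grad(\langle c\rangle)=\bigcap_{c\in K}K\supseteq\bigcap_{P\subseteq K}K=Grad(P)=P$; on the other hand $c\in P$ gives $\langle c\rangle\subseteq P$, so $Grad(\langle c\rangle)\subseteq Grad(P)=P$. Hence $P=Grad(\langle c\rangle)$ is graded radically principal, and Theorem~\ref{Theorem 2.7} completes the argument.

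The step I expect to be the main obstacle is producing the homogeneous element $c$: the avoidance hypothesis only yields some (possibly inhomogeneous) $x\in P$ outside $\bigcup_{P\nsubseteq K}K$. The natural fix is to pass to the graded ideal $J$ generated by the homogeneous components of $x$: since any graded prime over $J$ would then contain $x$, it contains $P$, whence $Grad(J)=P$; one then has to reduce $J$ to a single homogeneous generator. Getting this last reduction right, and correctly invoking Theorem~\ref{Theorem 2.7} to pass from graded primes to all graded ideals, is where the real work lies; the rest is the routine translation between prime avoidance and intersections of graded primes.
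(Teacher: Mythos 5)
Your forward implication is correct and is exactly the paper's argument: pick $c\in P\cap h(R)$ with $P=Grad(\langle c\rangle)$ via Proposition~\ref{Proposition 2.4}; any graded prime $K$ containing $c$ then contains $P$, so $c$ witnesses $P\nsubseteq\bigcup_{P\nsubseteq K}K$. The problem is the converse, and you have put your finger on precisely the right spot --- but your proposed repair does not close the gap, and in fact no repair can. The hypothesis only yields a possibly inhomogeneous $x\in P$ avoiding every graded prime $K$ with $P\nsubseteq K$. Passing to the graded ideal $J$ generated by the homogeneous components of $x$ does give $Grad(J)=P$ (any graded prime containing $J$ contains $x$, hence contains $P$), but the remaining step, ``reduce $J$ to a single homogeneous generator,'' is not a routine reduction: a graded prime that is the graded radical of a finitely generated graded ideal need not be the graded radical of a principal one. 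Concretely, let $F$ be a field, $R=F[x,y]$ with the standard $\mathbb{Z}$-grading, and $P=\langle x,y\rangle$. The graded primes not containing $P$ are $\langle 0\rangle$ and $\langle f\rangle$ with $f$ homogeneous irreducible; the element $x+y^{2}\in P$ lies in none of them, since a homogeneous divisor $f$ of $x+y^{2}$ must divide both homogeneous components $x$ and $y^{2}$ and hence is a unit. So $P\nsubseteq\bigcup_{P\nsubseteq K}K$ and $J=\langle x,y^{2}\rangle$ has $Grad(J)=P$; yet $P\neq Grad(\langle c\rangle)$ for every homogeneous $c$, because such a $Grad(\langle c\rangle)$ is $\langle 0\rangle$, all of $R$, or an intersection of height-one homogeneous primes.

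The same example shows the difficulty is not merely in your write-up: every graded prime of $F[x,y]$ satisfies the avoidance condition of the theorem, while $F[x,y]$ is not graded radically principal, so the converse cannot be proved as stated. The paper's own proof conceals the identical gap in the line ``and then $c_{g}\notin K$ for some $g\in G$'': the index $g$ there depends on $K$, and no single homogeneous component of $c$ need avoid all the bad primes simultaneously. Both your argument and the paper's go through verbatim once the hypothesis is strengthened to $P\cap h(R)\nsubseteq\bigcup_{P\nsubseteq K}K$, i.e.\ once one assumes a \emph{homogeneous} element of $P$ outside the union; that is the condition you should be working with, and you should flag that the unmodified statement is false.
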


\begin{proof}Suppose that $R$ is a graded radically principal ring. Let $P$ be a graded prime ideal of $R$. Then $P=Grad(\langle c\rangle)$ for some $c\in h(R)$. If $K$ is a graded prime ideal of $R$ such that $P\nsubseteq K$, then $c\notin K$, and then $c\notin \displaystyle\bigcup_{P\nsubseteq K}K$. Hence, $P\nsubseteq\displaystyle\bigcup_{P\nsubseteq K}K$. Conversely, let $P$ be a graded prime ideal of $R$. Since $P\nsubseteq\displaystyle\bigcup_{P\nsubseteq K}K$, there exists $c\in P$ such that $c\notin K$ whenever $P\nsubseteq K$, and then $c_{g}\notin K$ for some $g\in G$. Note that $c_{g}\in P$ as $P$ is a graded ideal, and clearly, $Grad(\langle c_{g}\rangle)\subseteq P$. If $K$ is a graded prime ideal of $R$ containing $c_{g}$, then $P\subseteq K$. Thus, $P\subseteq\displaystyle\bigcap_{c_{g}\in K}K=Grad(\langle c_{g}\rangle)$. Hence, $P=Grad(\langle c_{g}\rangle)$. By Theorem \ref{Theorem 2.7}, $R$ is graded radically principal.
\end{proof}

For a graded ring $R$, it is well known that if $P$ is a graded prime ideal of $R$ such that $P\subseteq\displaystyle\bigcup_{i\in \Delta}P_{i}$, where $P_{i}$'s are graded prime ideals of $R$ and $\Delta$ is finite, then $P\subseteq P_{i}$ for some $i\in \Delta$. This result does not hold for an infinite set $\Delta$. The following result characterizes graded rings with this property, and the proof is immediate from Theorem \ref{Theorem 2.11}.

\begin{cor}\label{Corollary 2.12} Let $R$ be a graded ring. Then $R$ is a graded radically principal ring if and only if $R$ has the graded avoidance property, that is, if $P\subseteq\displaystyle\bigcup_{i\in \Delta}P_{i}$, where $P$ and $P_{i}$'s are graded prime ideals of $R$, then $P\subseteq P_{i}$ for some $i\in \Delta$.
\end{cor}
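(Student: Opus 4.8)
The plan is to deduce this directly from Theorem \ref{Theorem 2.11}, observing that the condition ``$P\nsubseteq\bigcup_{P\nsubseteq K}K$ for every graded prime $P$'' is just a reformulation of the graded avoidance property applied to the canonical family of graded primes not containing $P$. So no new ideas are needed; one only has to translate between the two formulations and invoke Theorem \ref{Theorem 2.11}.

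For the forward implication, I would assume $R$ is graded radically principal and take graded prime ideals $P$ and $\{P_{i}\}_{i\in\Delta}$ with $P\subseteq\bigcup_{i\in\Delta}P_{i}$. Arguing by contradiction, suppose $P\nsubseteq P_{i}$ for every $i\in\Delta$. Then each $P_{i}$ lies in the family of graded prime ideals $K$ with $P\nsubseteq K$, so $\bigcup_{i\in\Delta}P_{i}\subseteq\bigcup_{P\nsubseteq K}K$, and hence $P\subseteq\bigcup_{P\nsubseteq K}K$, contradicting Theorem \ref{Theorem 2.11}. Therefore $P\subseteq P_{i}$ for some $i\in\Delta$.

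For the converse, I would assume $R$ has the graded avoidance property, fix a graded prime ideal $P$, and apply the avoidance property with the specific index set consisting of all graded prime ideals $K$ with $P\nsubseteq K$. If it were the case that $P\subseteq\bigcup_{P\nsubseteq K}K$, avoidance would give $P\subseteq K$ for some graded prime $K$ with $P\nsubseteq K$, which is absurd; hence $P\nsubseteq\bigcup_{P\nsubseteq K}K$, and Theorem \ref{Theorem 2.11} yields that $R$ is graded radically principal.

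There is essentially no obstacle here. The only point that needs a moment's care is making sure the union $\bigcup_{P\nsubseteq K}K$ in Theorem \ref{Theorem 2.11} is understood to be indexed over \emph{graded prime} ideals $K$, so that the families $\{P_{i}\}_{i\in\Delta}$ and $\{K : K\text{ graded prime},\ P\nsubseteq K\}$ are of the same type and the inclusion $\bigcup_{i\in\Delta}P_{i}\subseteq\bigcup_{P\nsubseteq K}K$ is legitimate. Since both statements deal with graded prime ideals throughout, this matching is automatic, and the corollary follows at once.
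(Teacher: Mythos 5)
Your proposal is correct and matches the paper's approach: the paper itself simply declares the corollary ``immediate from Theorem \ref{Theorem 2.11}'', and your two-directional translation between the avoidance property and the condition $P\nsubseteq\bigcup_{P\nsubseteq K}K$ is exactly the intended (and valid) argument.
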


\begin{cor}\label{Corollary 2.13} Let $R$ be a graded ring. If $R$ has finitely many graded prime ideals, then $R$ is a graded radically principal ring.
\end{cor}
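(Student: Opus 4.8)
The plan is to derive this at once from Corollary~\ref{Corollary 2.12}, i.e.\ to check that a graded ring with only finitely many graded prime ideals automatically enjoys the graded avoidance property for \emph{arbitrary} families of graded prime ideals, and then invoke that corollary.

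Concretely, I would start with graded prime ideals $P$ and $\{P_{i}\}_{i\in\Delta}$ of $R$, with $\Delta$ possibly infinite, such that $P\subseteq\bigcup_{i\in\Delta}P_{i}$. Since $R$ has only finitely many graded prime ideals, the collection $\{P_{i}:i\in\Delta\}$ is a finite set of graded prime ideals, say it consists of the distinct members $Q_{1},\dots,Q_{m}$. Then $P\subseteq Q_{1}\cup\dots\cup Q_{m}$ is a \emph{finite} union of graded prime ideals, so the finite graded prime avoidance theorem recalled just before Corollary~\ref{Corollary 2.12} applies and yields $P\subseteq Q_{j}$ for some $j$; as $Q_{j}=P_{i}$ for some $i\in\Delta$, this is precisely $P\subseteq P_{i}$ for some $i\in\Delta$. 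Hence $R$ has the graded avoidance property, and Corollary~\ref{Corollary 2.12} gives that $R$ is graded radically principal.

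There is no real obstacle here: the only content is the observation that having finitely many graded primes forces every union of graded primes to be, up to repetition, a finite one, after which the classical finite graded prime avoidance theorem does all the work. Alternatively, one could argue directly via Theorem~\ref{Theorem 2.11}: for a fixed graded prime $P$ the set $\{K : K \text{ graded prime}, P\nsubseteq K\}$ is finite, so finite graded prime avoidance shows $P\nsubseteq\bigcup_{P\nsubseteq K}K$, and Theorem~\ref{Theorem 2.11} then applies. Either route is a one-line deduction once the finiteness is used to reduce to the classical finite case.
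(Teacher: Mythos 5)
Your proposal is correct and follows essentially the same route as the paper: the paper's proof likewise observes that finitely many graded primes makes the graded avoidance property hold (any union of graded primes reduces to a finite union, handled by the finite avoidance theorem) and then invokes Corollary~\ref{Corollary 2.12}. You simply spell out the reduction in more detail than the paper does.
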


\begin{proof}If $R$ has finitely many graded prime ideals, then the graded avoidance property holds, and then by Corollary \ref{Corollary 2.12}, it follows that $R$ is graded radically principal.
\end{proof}

A graded commutative ring $R$ with unity is said to be a graded integral domain if $R$ has no homogeneous zero divisors. A graded commutative ring $R$ with unity is said to be a graded field if every nonzero homogeneous element of $R$ is unit. The next example shows that a graded field need not be a field.

\begin{exa}Let $R$ be a field and suppose that $F=\left\{x+uy:x, y\in R, u^{2}=1\right\}$. If $G=\mathbb{Z}_{2}$, then $F$ is $G$-graded by $F_{0}=R$ and $F_{1}=uR$. Let $a\in h(F)$ such that $a\neq0$. If $a\in F_{0}$, then $a\in R$ and since $R$ is a field, we have $a$ is a unit element. Suppose that $a\in F_{1}$. Then $a=uy$ for some $y\in R$. Since $a\neq0$, we have $y\neq0$, and since $R$ is a field, we have $y$ is a unit element, that is $zy=1$ for some $z\in R$. Thus, $uz\in F_{1}$ such that $(uz)a=uz(uy)=u^{2}(zy)=1.1=1$, which implies that $a$ is a unit element. Hence, $F$ is a graded field. On the other hand, $F$ is not a field since $1+u\in F-\{0\}$ is not a unit element since $(1+u)(1-u)=0$.
\end{exa}

\begin{prop}\label{Proposition 4.1} Let $R$ be a graded integral domain. Then $R$ is a graded field if and only if $R[X]$ is a graded radically principal ring.
\end{prop}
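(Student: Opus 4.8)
The plan is to combine the Cohen-type criterion (Theorem~\ref{Theorem 2.7}) with a direct construction. Throughout, $R[X]$ is regarded as $G$-graded with $\deg X=e$, so that $(R[X])_g=R_g[X]$ and a polynomial is homogeneous exactly when all of its coefficients lie in a single component $R_g$; in particular $h(R[X])=\bigcup_{g\in G}R_g[X]$. I would prove the forward implication via Theorem~\ref{Theorem 2.7} by describing the graded prime ideals of $R[X]$ explicitly, and the converse contrapositively by exhibiting, when $R$ is not a graded field, a concrete graded ideal of $R[X]$ that is not graded radically principal.

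For the forward direction, assume $R$ is a graded field and let $P$ be a graded prime ideal of $R[X]$; by Theorem~\ref{Theorem 2.7} it suffices to show $P$ is graded radically principal, and I claim it is in fact graded principal. First, $P\cap R=\bigoplus_{g\in G}(P\cap R_g)$ is a graded prime ideal of $R$; since a graded field has only the graded ideals $0$ and $R$ and $P$ is proper, $P\cap R=0$, so $P$ contains no nonzero constant. If $P=0$ we are done. If $P\neq 0$, I would pick $f\in P\setminus\{0\}$ of minimal $X$-degree $n$ (necessarily $n\geq 1$, since $P\cap R=0$), pass to a nonzero $G$-homogeneous component of $f$, which lies in $R_{g_0}[X]\cap P$ and, by minimality of $n$ together with $P\cap R=0$, again has $X$-degree $n$, and then multiply it by the inverse of its leading coefficient — a nonzero element of the graded field $R$, hence a unit — to obtain a monic polynomial $p\in R_e[X]\cap P$, homogeneous of degree $e$. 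Finally, dividing an arbitrary $q\in P$ by the monic $p$ and running the same minimality argument to force the remainder (of $X$-degree $<n$) to vanish yields $P=\langle p\rangle$. Then $Grad(P)=Grad(\langle p\rangle)$ with $p\in h(R[X])$, and Theorem~\ref{Theorem 2.7} completes this direction.

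For the converse I would argue contrapositively: assuming $R$ is a graded integral domain that is \emph{not} a graded field, fix a nonzero non-unit $a\in h(R)$, say $a\in R_h$, and set $I=\langle a,X\rangle$, which is graded (being generated by homogeneous elements), proper (since $R[X]/I\cong R/aR\neq 0$), and so has proper graded radical $Grad(I)$ containing both $a$ and $X$. Suppose for contradiction $Grad(I)=Grad(\langle c\rangle)$ with $c\in h(R[X])$, say $c\in R_g[X]$. From $X\in Grad(\langle c\rangle)$ we get $X^k\in\langle c\rangle$ for some $k\geq 1$; in particular $c\neq 0$, and since the degree-$e$ component of $\langle c\rangle$ is $c\,R_{g^{-1}}[X]$ while $X^k$ is homogeneous of degree $e$, we may write $X^k=cf$ with $f\in R_{g^{-1}}[X]$. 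Comparing the lowest- and highest-degree $X$-coefficients on both sides and using that $R$, being a graded integral domain, has no homogeneous zero divisors — so the relevant products $c_if_j\in R_e$ of nonzero $c_i\in R_g$ and $f_j\in R_{g^{-1}}$ do not vanish — forces $c$ to be a single monomial $c_pX^p$ with $c_p$ a unit of $R$; hence $\langle c\rangle=\langle X^p\rangle$. If $p=0$ then $Grad(\langle c\rangle)=R[X]$, contradicting properness of $Grad(I)$. If $p\geq 1$ then $Grad(\langle c\rangle)=Grad(\langle X\rangle)=\langle X\rangle$ — the last equality because $R[X]/\langle X\rangle\cong R$ is a graded integral domain, so $\langle X\rangle$ is a graded prime ideal — and then $a\in Grad(I)=\langle X\rangle=XR[X]$ contradicts $a$ being a nonzero constant. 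Thus $I$ is not graded radically principal, so $R[X]$ is not a graded radically principal ring.

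The routine points — that $P\cap R$ is a graded prime ideal of $R$, that division by a monic polynomial is valid over the commutative ring $R$, and that the degree-$e$ component of $\langle c\rangle$ is $c\,R_{g^{-1}}[X]$ — I would dispatch quickly. The step I expect to be the real obstacle is the coefficient analysis in the converse. A graded integral domain need \emph{not} be an integral domain — the graded field $F=\{x+uy:u^2=1\}$ in the example just before this proposition is precisely such a ring — so one cannot pass to a polynomial \emph{domain} $R[X]$ and invoke unique factorization or Krull's principal ideal theorem; the degree bookkeeping on $X^k=cf$ has to be carried out using only the absence of \emph{homogeneous} zero divisors in $R$, applied to products of homogeneous elements of complementary degrees $g$ and $g^{-1}$. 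Making this airtight — in particular, pinning down that it forces $c$ to be a monomial with a unit coefficient — is the crux of the argument.
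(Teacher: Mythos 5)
Your proof is correct, and it diverges from the paper's in instructive ways. For the forward direction the paper simply asserts that $R[X]$ over a graded field is a graded principal domain and stops there; you actually prove the needed statement — that every graded prime $P$ of $R[X]$ is principal — by showing $P\cap R=0$, extracting from a minimal-degree element of $P$ a nonzero homogeneous component whose leading coefficient is a unit, normalizing to a monic $p\in R_e[X]\cap P$, and dividing; you then feed this into Theorem~\ref{Theorem 2.7}. This fills a genuine gap in the paper's exposition. For the converse both arguments work with the ideal $\langle a,X\rangle$, but in opposite order: the paper argues directly, using $a^n=gf$ and additivity of $X$-degree (valid because nonzero homogeneous elements of a graded integral domain are non-zero-divisors) to force the generator $f$ to be a nonzero constant, then $X^m=hf$ to force $f$ to be a unit, whence $I=R[X]$ and evaluating $1=sa+tX$ at $X=0$ makes $a$ a unit; your contrapositive version instead pins down $c$ as a unit times $X^p$ from $X^k=cf$ alone, by the lowest/highest-coefficient comparison in complementary degrees $g$ and $g^{-1}$, and then rules out $p=0$ and $p\geq 1$ separately. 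Both hinge on the same fact — products of nonzero homogeneous elements do not vanish — and both are sound; yours requires the extra (correct) observations that $\langle X\rangle$ is graded prime and that the degree-$e$ part of $\langle c\rangle$ is $cR_{g^{-1}}[X]$. One small caveat: you fix the convention $\deg X=e$, whereas the paper never specifies the grading on $R[X]$ and its subsequent examples take $\deg X\neq e$; your argument only uses that $X$ is homogeneous, so it adapts unchanged to any homogeneous degree for $X$, but it is worth saying so.
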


\begin{proof}Suppose that $R$ is a graded field. Then $R[X]$ is a graded principal domain, which implies that $R$ is a graded radically principal ring. Conversely, let $a\in h(R)-\{0\}$. Suppose that $I$ is the graded ideal of $R[X]$ generated by $a$ and $X$. Since $R[X]$ is graded radically principal, $Grad(I)=Grad(\langle f\rangle)$ for some homogeneous $f\in I$. Since $a\in I$, $a\in Grad(\langle f\rangle)$, and then there exists a positive integer $n$ such that $a^{n}=gf$ for some $g\in R[X]$. Since $R$ is a graded integral domain, $0=deg(a^{n})=deg(g)+deg(f)$, which implies that $f$ is a nonzero constant. Also, since $X\in I$, $X\in Grad(\langle f\rangle)$, and then there exists a positive integer $m$ such that $X^{m}=hf$ for some $h\in R[X]$. Since $f$ is constant, $1=b_{m}f$, where $b_{m}$ is the coefficient of $X^{m}$ in $h$, which implies that $f$ is a unit element, and then $I=R[X]$, so there exist $s, t\in R[X]$ such that $1=sa+tX$, and then $1=s(0)a$, where $s(0)\in R\subseteq h(R[X])$. Hence, $a$ is a unit element. So, $R$ is a graded field.
\end{proof}

\begin{exa}Let $K$ be a field, $R=K[X]$ and $G=\mathbb{Z}$. Then $R$ is $G$-graded by $R_{j}=KX^{j}$ for $j\geq0$, and $R_{j}=0$ otherwise. Since $K$ is a field, $K$ is a graded field, and then $K[X]$ is graded radically principal by Proposition \ref{Proposition 4.1}.
\end{exa}

\begin{exa}Let $K$ be a field, $R=K[X]$ and $G=\mathbb{Z}_{3}$. Then $R$ is $G$-graded by $R_{0}=\langle 1, x^{3}, x^{6},...\rangle$, $R_{1}=\langle x, x^{4}, x^{7},...\rangle$ and $R_{2}=\langle x^{2}, x^{5}, x^{8},...\rangle$. By Proposition \ref{Proposition 4.1}, $K[X]$ is graded radically principal.
\end{exa}

First strongly graded rings have been introduced and studied in \cite{Refai}, a $G$-graded ring $R$ is said to be first strong if $1\in R_{g}R_{g^{-1}}$ for all $g\in supp(R,G)$, where $supp(R, G)=\left\{g\in G:R_{g}\neq\{0\}\right\}$. In fact, it has been proved that $R$ is first strongly $G$-graded if and only if $supp(R,G)$ is a subgroup of $G$ and $R_{g}R_{h}= R_{gh}$ for all $g,h\in supp(R,G)$. We introduce the following:

\begin{prop}\label{7}Every $G$-graded field is first strongly graded.
\end{prop}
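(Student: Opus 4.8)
The plan is to verify the defining condition directly: show that $1\in R_{g}R_{g^{-1}}$ for every $g\in supp(R,G)$. So fix such a $g$; by definition $R_{g}\neq\{0\}$, and I can pick a nonzero homogeneous element $a\in R_{g}$. Since $R$ is a graded field, $a$ is a unit in $R$, and the crux of the argument is to show that its inverse is itself homogeneous, of degree $g^{-1}$.

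To see this, write $a^{-1}=\sum_{h\in G}b_{h}$ with $b_{h}\in R_{h}$ for each $h$. Multiplying through by $a$ gives $1=\sum_{h\in G}ab_{h}$, where $ab_{h}\in R_{gh}$. Comparing homogeneous components, using the uniqueness of the decomposition of an element of $R$ together with $1\in R_{e}$, we obtain $ab_{g^{-1}}=1$ and $ab_{h}=0$ for every $h\neq g^{-1}$. Since $a$ is a unit it is not a zero divisor, so $ab_{h}=0$ forces $b_{h}=0$ for all $h\neq g^{-1}$; hence $a^{-1}=b_{g^{-1}}\in R_{g^{-1}}$.

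Consequently $1=a\,b_{g^{-1}}\in R_{g}R_{g^{-1}}$, and as $g\in supp(R,G)$ was arbitrary, $R$ is first strongly graded. The only step that needs any thought is the homogeneity of $a^{-1}$; everything else is an immediate unwinding of the definitions. (One could instead quote the characterization of first strongly graded rings from \cite{Refai}, but the computation above is short and self-contained.)
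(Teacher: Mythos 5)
Your proof is correct and follows the same overall strategy as the paper: pick a nonzero $a\in R_{g}$, use that it is a unit, and conclude $1\in R_{g}R_{g^{-1}}$. The one genuine difference is that the paper simply asserts the existence of a \emph{homogeneous} inverse $y\in h(R)$ with $ay=1$ and then reads off its degree, even though its definition of a graded field only guarantees that $a$ is a unit in $R$; your component-comparison argument (decompose $a^{-1}=\sum_{h}b_{h}$, match degrees in $1=\sum_{h}ab_{h}$, and use that a unit is not a zero divisor to kill the terms with $h\neq g^{-1}$) proves that the inverse is in fact homogeneous of degree $g^{-1}$, so your write-up actually closes a small gap that the paper's proof glosses over. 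Both routes are short; yours is the more self-contained and rigorous of the two.
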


\begin{proof}Let $R$ be a $G$-graded field. Suppose that $g\in supp(R, G)$. Then $R_{g}\neq\{0\}$, and then there exists $0\neq x\in R_{g}$. Since $R$ is a graded field, we conclude that there exists $y\in h(R)$ such that $xy=1$. Since $y\in h(R)$, $y\in R_{h}$ for some $h\in G$, and then $1=xy\in R_{g}R_{h}\subseteq R_{gh}$. So, $0\neq1\in R_{gh}\bigcap R_{e}$, which implies that $gh=e$, that is $h=g^{-1}$. Hence, $1=xy\in R_{g}R_{g^{-1}}$, and thus $R$ is first strongly graded.
\end{proof}

\begin{cor}Let $R$ be a graded integral domain. If $R[X]$ is a graded radically principal ring, then $R$ is first strongly graded.
\end{cor}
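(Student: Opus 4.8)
The plan is to obtain this as a one-line chaining of the two immediately preceding results. Proposition \ref{Proposition 4.1} asserts that, when $R$ is a graded integral domain, $R[X]$ is graded radically principal \emph{if and only if} $R$ is a graded field; and Proposition \ref{7} asserts that every $G$-graded field is first strongly graded. So the corollary should drop out by applying these in sequence, with no independent argument needed.

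Concretely, the first step is to invoke the ``only if'' direction of Proposition \ref{Proposition 4.1}: the hypotheses of the corollary are exactly that $R$ is a graded integral domain and that $R[X]$ is a graded radically principal ring, which is precisely the input that proposition consumes, and its output is that $R$ is a graded field. The second step is to feed this graded field into Proposition \ref{7}, whose only hypothesis is that $R$ be a $G$-graded field, to conclude that $R$ is first strongly graded. That is the desired conclusion, so the proof is complete.

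I do not expect any real obstacle here: all of the genuine work is already packaged inside Propositions \ref{Proposition 4.1} and \ref{7}. The only point that warrants a moment's attention is that the hypotheses line up cleanly across the two invocations — ``graded integral domain'' is exactly what Proposition \ref{Proposition 4.1} needs, and ``graded field'' is exactly what Proposition \ref{7} needs, so no auxiliary lemmas or case analysis are required. One might optionally remark that the corollary is in fact a strict weakening of ``$R$ is a graded field,'' recorded separately only because first strong gradedness is the structural property of interest in this context.
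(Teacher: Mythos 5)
Your proof is correct and is exactly the paper's own argument: the paper's proof reads, in full, ``Apply Proposition \ref{Proposition 4.1} and Proposition \ref{7}.'' You have simply spelled out the same two-step chaining (graded radically principal $R[X]$ implies $R$ is a graded field, which implies $R$ is first strongly graded) in more detail.
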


\begin{proof}Apply Proposition \ref{Proposition 4.1} and Proposition \ref{7}.
\end{proof}

\begin{thm}\label{Theorem 4.3}Let $R$ be a graded ring. If $R[X]$ is graded radically principal, then $R$ is graded radically principal and every graded prime ideal of $R$ is graded maximal.
\end{thm}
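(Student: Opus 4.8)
The plan is to obtain both conclusions by realising $R$, and each ring $(R/P)[X]$, as a quotient of $R[X]$ by a graded ideal, and then invoking Proposition~\ref{Proposition 2.6} (which transfers the graded radically principal property to quotients) together with Proposition~\ref{Proposition 4.1}. Throughout I will use that $R[X]$ is $G$-graded with $(R[X])_{g}=R_{g}[X]$, so that $X$ is homogeneous and evaluation at $0$ is a morphism of graded rings.

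First I would prove that $R$ itself is graded radically principal. Since $X\in h(R[X])$, Proposition~\ref{1} shows that $\langle X\rangle$ is a graded ideal of $R[X]$, and the evaluation homomorphism $\varepsilon\colon R[X]\to R$, $\varepsilon(f)=f(0)$, is a surjective homomorphism of $G$-graded rings with kernel $\langle X\rangle$; hence $R[X]/\langle X\rangle\cong R$ as $G$-graded rings. Since $R[X]$ is graded radically principal by hypothesis, Proposition~\ref{Proposition 2.6} immediately gives that $R\cong R[X]/\langle X\rangle$ is graded radically principal.

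Next I would treat the graded primes. Let $P$ be a graded prime ideal of $R$. Then $P[X]=P\,R[X]$ is a graded ideal of $R[X]$, and reduction of coefficients modulo $P$ gives a $G$-graded isomorphism $R[X]/P[X]\cong (R/P)[X]$. Applying Proposition~\ref{Proposition 2.6} again, $(R/P)[X]$ is graded radically principal. Because $P$ is graded prime, $R/P$ is a graded integral domain, so Proposition~\ref{Proposition 4.1} applies to it: as $(R/P)[X]$ is graded radically principal, $R/P$ must be a graded field, which is to say that $P$ is a graded maximal ideal of $R$. This completes the argument.

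I do not expect a genuine obstacle; the proof is essentially a repackaging of earlier results. The items needing (routine) verification are that $\langle X\rangle$ and $P[X]$ are graded ideals of $R[X]$, that the isomorphisms $R[X]/\langle X\rangle\cong R$ and $R[X]/P[X]\cong (R/P)[X]$ respect the $G$-grading, and that for a graded prime $P$ the quotient $R/P$ is a graded integral domain whose being a graded field is equivalent to $P$ being graded maximal. The one substantive input is the converse direction of Proposition~\ref{Proposition 4.1}, which is exactly what converts the graded radical principality of $(R/P)[X]$ into $R/P$ being a graded field; with that proposition available, the rest is formal.
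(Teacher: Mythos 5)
Your argument is correct and is essentially identical to the paper's proof: both pass to $R\cong R[X]/\langle X\rangle$ and $(R/P)[X]\cong R[X]/P[X]$, apply Proposition~\ref{Proposition 2.6} to transfer the graded radically principal property to these quotients, and then use the converse direction of Proposition~\ref{Proposition 4.1} to conclude that $R/P$ is a graded field. The only difference is that you spell out the routine checks that the relevant isomorphisms respect the $G$-grading, which the paper leaves implicit.
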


\begin{proof}Since $R[X]$ is graded radically principal, by Proposition \ref{Proposition 2.6}, $R[X]/\langle X\rangle$ is graded radically principal, and then $R$ is graded radically principal. Let $P$ be a graded prime ideal of $R$. Then since $\left(R/P\right)[X]=R[X]/P[X]$ is graded radically principal by Proposition \ref{Proposition 2.6}, and then $R/P$ is a graded field by Proposition \ref{Proposition 4.1}. Hence, $P$ is a graded maximal ideal of $R$.
\end{proof}


\begin{thebibliography}{1}

\bibitem{Ashby} W. T. Ashby, On graded principal ideal domains, Journal of Algebra, Number Theory and Applications 24 (2) (2012), 159-171.

\bibitem{Atani Tekir} S. E. Atani and \"{U} Tekir, On the graded primary avoidance theorem, Chiang Mai Journal of Science, 34 (2) (2007), 161-164.

\bibitem{AQ} M. Aqalmoun and M. El ouarrachi, Radically principal rings, Khayyam Journal of Mathematics, 6 (2) (2020), 243-249.

\bibitem{Farzalipour} F. Farzalipour and P. Ghiasvand, On the union of graded prime submodules, Thai Journal of Mathematics, 9 (1) (2011), 49-55.

\bibitem{Nastasescue}  C. Nastasescu and F. V. Oystaeyen, Methods of graded rings, Lecture Notes in Mathematics, 1836, Springer-Verlag, Berlin, (2004).

\bibitem{Refai} M. Refai, Various types of strongly graded rings, Abhath Al-Yarmouk Journal (Pure Sciences and Engineering Series) 4 (2) (1995), 9-19.

\bibitem{Refai Dawwas} M. Refai and R. Abu-Dawwas, On Generalizations of Graded Second Submodules, Proyecciones Journal of Mathematics, 39 (6) (2020), 1547-1564.

\bibitem{Refai Hailat} M. Refai, M. Hailat and S. Obiedat, Graded radicals and graded prime spectra, Far East Journal of Mathematical Sciences, (2000), 59-73.



\end{thebibliography}
\end{document}